\numberwithin{equation}{section}
\newtheorem{theorem}{Theorem}[section]
\newtheorem{proposition}[theorem]{Proposition}
\newtheorem{lemma}[theorem]{Lemma}
\theoremstyle{definition}
\theoremstyle{remark}
\newtheorem{remark}{Remark}[section]
\DeclareMathOperator{\supp}{supp}
\definecolor{orange}{rgb}{1,0.5,0}
\definecolor{green}{rgb}{0.4,0.8,0.5}
 \keywords{optical tomography, increasing stability, Inverse Problems}
\title[ ]
      {Increasing stability for optical tomography}
\author [ ]
       {Ru-Yu Lai}
\thanks{Department of Mathematics, University of Washington, Seattle, WA 98195, USA. Email: rylai@uw.edu}
\begin{document}

\maketitle
\setcounter{tocdepth}{1}

\begin{abstract}
We study the phenomenon of increasing stability in the diffuse optical tomography (DOT). It is well-known that the DOT inverse problem is exponentially ill-posed. In this paper, we show that the ill-posedness decreases when we increase the frequency.
\end{abstract}

\section{Introduction}
In this paper, we study the diffuse optical tomography \cite{BBMDKGZ}. DOT is an medical imaging modality in which tissue
is illuminated by near-infrared light from an array of sources, the
multiply-scattered light which emerges is observed with an array of
detectors, and then a model is used to infer the localized optical
properties of the illuminated tissue. The most important current
application of DOT are detecting tumors in the breast and imaging
the brain. The more blood supply of tumors compared to surrounding
tissue provides a target absorption inhomogeneity to image. A
similar idea allows us to image bleeding in the brain. It is
well-known that the DOT inverse problem is exponentially ill-posed
or unstable \cite{Ar}. The inverse problem is to recover coefficient
of absorption and diffusion in the tissue from the boundary
measurement of photon density. Let $\Omega\subset \mathbb{R}^n,
n\geq3$ be a bounded domain with smooth boundary. We assume that
the propagation of light through this medium can be modeled by the
diffusion approximation
\begin{align}\label{cond}
     \nabla\cdot \gamma\nabla u(x) +D(x)u(x)+k^2 u(x)=0\ \ \hbox{in $\Omega$},
\end{align}
where $u$ describes the photon density in the medium, $\gamma$ is the diffusion coefficient and $D$ the absorption coefficient.
We show that the stability increases as frequency $k$ is growing.

A short paper \cite{C} published by A. P. Calder\text{\'{o}}n in
1980 motivated many developments in inverse problems, in particular
in the construction of ``complex geometrical optics'' (CGO)
solutions of partial differential equations to solve inverse
problems. The problem that Calder\text{\'{o}}n considered was
whether one can determine the electrical conductivity of a medium by
making voltage and current measurements at the boundary of the
medium. This inverse method is known as \emph{Electrical Impedance
Tomography} (EIT). EIT arises not only in geophysical prospections
(See \cite{ZK}) but also in medical imaging (See \cite{Holder},
\cite{HIMS} and \cite{J}). We now describe more precisely the
mathematical problem. Let $\Omega\subset\mathbb{R}^n$ be a bounded
domain with smooth boundary. The electrical conductivity of $\Omega$
is represented by a bounded and positive function $\gamma(x)$. The
equation for the potential is given by
\begin{align*}
         \nabla\cdot \gamma\nabla u =0 & \ \ \hbox{in $\Omega$}.
\end{align*}
Given $f\in H^{1/2}(\partial\Omega)$ on the boundary, the potential
$u\in H^1(\Omega)$ solves the Dirichlet problem
\begin{align}\label{con}
     \left\{
       \begin{array}{rl}
         \nabla\cdot \gamma\nabla u =0 & \hbox{in $\Omega$} \\
         u=f & \hbox{on $\partial\Omega$.}
       \end{array}
     \right.
\end{align}
The Dirichlet-to-Neumann map, or voltage-to-current map, is given by
$$
     \Lambda_\gamma f=\gamma\partial_\nu u|_{\partial\Omega},
$$
where $\partial_\nu u=\nu\cdot\nabla u$ and $\nu$ is the unit outer normal to $\partial\Omega$. The well-known inverse problem is to recover the conductivity $\gamma$ from the boundary measurement $\Lambda_\gamma$.

The uniqueness issue for $C^2$ conductivities was first settled by
Sylvester and Uhlmann \cite{SU1}. Later, the regularity of
conductivity was relaxed to $3/2$ derivatives in some sense in
\cite{BT} and \cite{PPU}. Uniqueness for conductivities with
conormal singularities in $C^{1,\varepsilon}$ was shown in
\cite{GLU}. See \cite{U1} for the detailed development. Recently,
Haberman and Tataru \cite{HT} extended the uniqueness result to
$C^{1}$ conductivities or small in the $W^{1,\infty}$ norm.

A logarithmic stability estimate for conductivities was obtained by
Alessandrini \cite{A1}. Then Mandache \cite{M} showed that this
estimate is optimal. For a review of stability issues in EIT see
\cite{A2}. The logarithmic stability makes it difficult to design
reliable reconstruction algorithms in practice since small errors in
the data of the inverse problem result in large error in numerical
reconstruction of physical properties of the medium. However, it has
been observed numerically that the stability increases if one
increases the frequency in some cases. These papers (\cite{AI},
\cite{AI2}, \cite{HI}, \cite{I2007}, \cite{I}, \cite{NUW})
rigorously demonstrated the increasing stability phenomena in
different settings.

The outline of the paper is as follows. In section 2, we start with a more detailed description of the considered problem and main result. In section 3, we construct the complex geometrical optics solutions by following the idea of Haberman and Tataru \cite{HT}. Then we deduce a useful boundary integral estimate in section 4. The detailed proof of the main theorem is presented in section 5.

\section{Main results}
We assume that the diffusion coefficient $\gamma\in C^{1,\varepsilon}(\overline\Omega), 0<\varepsilon<1$ and the absorption coefficient $D\in L^\infty(\Omega)$. We consider the equation
\begin{align}\label{cond}
     \nabla\cdot \gamma\nabla u(x) +D(x)u(x)+k^2 u(x)=0\ \ \hbox{in $\Omega$}
\end{align}
with the Dirichlet boundary data
\begin{align}\label{cond11}
     u=g\ \ \hbox{on $\partial\Omega$}.
\end{align}

For real-valued $\gamma$, the Dirichlet problem (\ref{cond}),
(\ref{cond11}) might fail to exist and be unique, so that the
Dirichlet-to-Neumann map is not well-defined. Then one can consider
replace this map by the Cauchy data with naturally defined norm (see
\cite{NUW}). We will assume that $k$ is not a Dirichlet eigenvalue.
Then $\Lambda_\gamma$ is a continuous linear operator from
$H^{1/2}(\partial\Omega)$ into $H^{-1/2}(\partial\Omega)$. We denote
its operator norm by $\|\Lambda_\gamma\|_*$.

Let $v=\sqrt{\gamma} u$. The equation (\ref{cond}) can be transformed into this Helmholtz type equation
\begin{align}\label{helm}
     \Delta v(x) +Q(x) v(x)=0\ \ \hbox{in $\Omega$},
\end{align}
where $Q$ can be formally defined by $Q=q+
\left(k^2+D\right)\gamma^{-1}$ with
$q=-\frac{\Delta\sqrt{\gamma}}{\sqrt{\gamma}}$.

Now we have the main result.
\begin{theorem}\label{main}
     Let $\Omega\subset \mathbb{R}^n, n\geq3$ be a bounded domain with smooth boundary.
     Let $M$, $\delta$ and $\varepsilon$ be real constants such that $M>1$, $0<\delta<1$ and $0<\varepsilon<1$. Suppose that $\|\gamma_j\|_{C^{1,\varepsilon}(\overline\Omega)}\leq M$ with $\gamma_j(x)>1/M$
     for all $x\in\Omega$ and $\|D_j\|_{L^\infty(\Omega)}\leq M$
     and
     $$
     A=\max\Big\{\|\Lambda_{\gamma_1}-\Lambda_{\gamma_2}\|^2_*, \|\Lambda_{\gamma_1}-\Lambda_{\gamma_2}\|^{2\delta}_*\Big\}.
     $$
     Assume that $-\log A\geq 1$.
     There exists a constant C such that if $k\geq 1$, then we have the following stability estimates:
     \begin{align}\label{maininequ11}
         &\|\left(q_1+\left(k^2+D_1\right)\gamma_1^{-1}\right)-\left(q_2+\left(k^2+D_2\right)\gamma_2^{-1}\right)\|_{H^{-s}(\Omega)}  \notag\\
         &\leq  C e^{Ck^\alpha}A^{1/2}
         + C\max\left\{k^{\frac{-\alpha\varepsilon}{1+\varepsilon}},k^{4-\alpha},k^2\left(k^\alpha+\log \frac{1}{A}\right)^{1-s} \right\},\ \ \alpha>4
     \end{align}
     and
     \begin{align}\label{maininequ}
         \|\gamma_1^{-1}-\gamma_2^{-1}\|_{H^{-s}(\Omega)}\leq  C e^{Ck^\alpha}A^{1/2}
         + C\max\left\{ k^{-2},k^{2-\alpha} \right\},\ \ \alpha>2
     \end{align}
with $2s>n+3$ and $C>0$ depends on $n,s,\varepsilon,\Omega$ and $M$.

\end{theorem}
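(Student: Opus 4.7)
The plan is to reduce the problem to an inverse problem for the Helmholtz-type equation $\Delta v + Q v = 0$ via the Liouville substitution $v = \sqrt{\gamma}\,u$, and then combine Haberman--Tataru style complex geometrical optics (CGO) solutions with a Fourier-splitting argument in the spirit of the increasing-stability scheme of \cite{I}, \cite{NUW}. Set $Q_j = q_j + (k^2+D_j)\gamma_j^{-1}$; the a priori bounds on $\gamma_j$, $D_j$ give a uniform control on $Q_j$ in a fixed negative-order Sobolev space (with a $k^2$ loss coming from the $k^2\gamma_j^{-1}$ piece).

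\emph{Step 1 (CGO solutions).} For each target Fourier frequency $\xi\in\R^n$ I would construct, as announced for Section~3, solutions $v_j = e^{x\cdot\zeta_j}(1+\psi_j)$ of $\Delta v_j + Q_j v_j = 0$ with $\zeta_j\cdot\zeta_j = 0$ and $\zeta_1+\zeta_2 = -i\xi$, in the framework of \cite{HT}. The complex parameters are chosen with $|\zeta_j|\sim\tau$ for a large parameter $\tau$, so that the $C^{1,\varepsilon}$ regularity of $\gamma_j$ together with the $L^\infty$ control of $D_j$ produces a quantitative remainder estimate of the form $\|\psi_j\|_{L^2(\Omega)} \le C\tau^{-\varepsilon/(1+\varepsilon)}$ in the Bourgain-type space used by Haberman and Tataru.

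\emph{Step 2 (Alessandrini identity and a Fourier-localized bound).} I would then apply Alessandrini's integral identity
\[
\int_\Omega (Q_1-Q_2)\,v_1 v_2\,dx = \langle (\Lambda_{\gamma_1}-\Lambda_{\gamma_2})v_1|_{\partial\Omega}\,,\, v_2|_{\partial\Omega}\rangle + (\text{boundary terms}),
\]
where the boundary terms are precisely those controlled by the boundary integral estimate of Section~4. Expanding the CGO ansatz on the left produces $\widehat{Q_1-Q_2}(\xi)$ plus error terms bounded by $Ck^2\|\psi_j\|_{L^2}$ (the $k^2$ reflecting the worst-case size of $Q_j$), while the right-hand side is bounded by $\|\Lambda_{\gamma_1}-\Lambda_{\gamma_2}\|_*\,\|v_j\|_{H^{1/2}(\partial\Omega)}^2 \le C e^{C\tau}A^{1/2}$. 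The net outcome is a pointwise estimate
\[
|\widehat{Q_1-Q_2}(\xi)| \le C e^{C\tau}A^{1/2} + C k^2 \tau^{-\varepsilon/(1+\varepsilon)}, \qquad |\xi|\le 2\tau.
\]

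\emph{Step 3 (Splitting and optimization).} Next I would split
\[
\|Q_1-Q_2\|_{H^{-s}(\Omega)}^2 = \int_{|\xi|\le R}(1+|\xi|^2)^{-s}|\widehat{Q_1-Q_2}|^2\,d\xi + \int_{|\xi|>R}(1+|\xi|^2)^{-s}|\widehat{Q_1-Q_2}|^2\,d\xi,
\]
with cutoff $R$ chosen as the larger of $k^\alpha$ and $\log(1/A)$ (allowed because $-\log A\geq 1$, and matching the form of the max in \eqref{maininequ11}). Picking $\tau\sim R$ in Step~2 and using $2s>n+3$ to make $(1+|\xi|^2)^{-s}$ integrable, the low-frequency integral contributes the $e^{Ck^\alpha}A^{1/2}$, $k^{-\alpha\varepsilon/(1+\varepsilon)}$, and $k^{4-\alpha}$ terms of \eqref{maininequ11}, while the a priori control on $Q_j$ yields a tail of the form $k^2 R^{1-s}$, accounting for the last entry of the maximum. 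Estimate \eqref{maininequ} then follows from \eqref{maininequ11} by writing
\[
k^2(\gamma_1^{-1}-\gamma_2^{-1}) = (Q_1-Q_2) - (q_1-q_2) - (D_1\gamma_1^{-1}-D_2\gamma_2^{-1}),
\]
dividing by $k^2$, and using the $k$-independent bound on the last two terms to produce the $Ck^{-2}$ term while $k^{4-\alpha}$ becomes $k^{2-\alpha}$.

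\emph{Main obstacle.} The crux is the quantitative balancing in Step~3: the exponential amplification $e^{C\tau}$ from the CGO trace norms has to be offset simultaneously against the algebraic decay $\tau^{-\varepsilon/(1+\varepsilon)}$ of the remainder, the tail $R^{1-s}$, and the $k^2$ inflation, all while letting $\tau\sim R$ grow like $k^\alpha$. The restriction $\alpha>4$ in \eqref{maininequ11} is dictated precisely by this $k^2$ inflation of the CGO error, whereas \eqref{maininequ} improves to $\alpha>2$ because the leading $k^2$ cancels after division. The Haberman--Tataru construction (rather than classical $C^2$ CGO theory) is essential to keep the $\varepsilon/(1+\varepsilon)$ rate quantitative at the $C^{1,\varepsilon}$ regularity assumed here.
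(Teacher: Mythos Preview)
Your overall architecture matches the paper's: CGO solutions in the Haberman--Tataru $\dot X^{1/2}_\zeta$ framework, a boundary integral identity, and a Fourier--splitting/optimization. But Step~1 contains a real gap. You assert a \emph{pointwise} (in $\zeta$) remainder bound $\|\psi_j\|_{L^2(\Omega)}\le C\tau^{-\varepsilon/(1+\varepsilon)}$. In the $\dot X^{1/2}_\zeta$ setting this would require $\|Q_j\|_{\dot X^{-1/2}_\zeta}\to 0$ at a rate in $|\zeta|$, and the paper explicitly notes that such a fixed-$\zeta$ decay \emph{fails} at this regularity. What the paper actually proves (its Lemma~3.2) is an \emph{averaged} bound
\[
\frac{1}{\lambda}\int_{S^{n-1}}\!\int_\lambda^{2\lambda}\|Q_j\|^2_{\dot X^{-1/2}_\zeta}\,d\tau\,d\eta_1
\;\le\; C\Big((1+\langle r\rangle^2\lambda^{-1})\lambda^{-\varepsilon/(1+\varepsilon)}+k^4\lambda^{-1}\Big),
\]
obtained by mollifying $\nabla\log\gamma$ and using the Haberman--Tataru averaging lemma. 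Only after integrating your Fourier-localized identity in $(\tau,\eta_1)$ and applying Cauchy--Schwarz does one extract a usable bound on $|\widehat{Q_1-Q_2}(r\eta)|$. Without this averaging step, your Step~2 inequality is unjustified at $C^{1,\varepsilon}$ regularity.

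Two smaller points. First, your Step~2 error term $Ck^2\tau^{-\varepsilon/(1+\varepsilon)}$ conflates two genuinely different scales: the averaged bound above produces a regularity-driven piece $\lambda^{-\varepsilon/(1+\varepsilon)}$ (no $k$) and a frequency-driven piece $k^4\lambda^{-1}$ (from the $(k^2+D)\gamma^{-1}$ part of $Q$ appearing \emph{quadratically}, once through $Q$ and once through $\psi$). With $\lambda\sim k^\alpha$ these become exactly the $k^{-\alpha\varepsilon/(1+\varepsilon)}$ and $k^{4-\alpha}$ terms you list in Step~3; your single combined term would not. Second, the optimization in Step~3 is more delicate than ``take $R=\max\{k^\alpha,\log(1/A)\}$'': one must pick $T=p\log(1/A)$ with $p$ small enough that $e^{CT}A$ is dominated by $(k^\alpha+\log(1/A))^{-m}$, and then treat the two cases $p\log(1/A)\gtrless a_0 k^\alpha$ separately (the paper also needs a three-interval split $[0,a_0k^\alpha]\cup[a_0k^\alpha,T]\cup[T,\infty)$ because $\tau$ must exceed $C_*k^2/\sqrt{2}$ for the CGO construction, so one cannot take $\tau=r$ uniformly for small $r$). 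Your derivation of \eqref{maininequ} from \eqref{maininequ11} in the last paragraph is correct and matches the paper.
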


\begin{remark}
The estimate (\ref{maininequ11}) consists of two parts-Lipschitz and logarithmic estimates. The logarithmic part decreases and the Lipschitz part becomes dominant when the frequency k is growing. In other words, the stability is increasing if $k$ is large.

\end{remark}

\section{Complex geometrical optics solutions}
We construct CGO solutions to (\ref{cond}) with less regular
conductivities in this section. To deal with less regular
conductivities $\gamma$, we adopt Bourgain-type spaces introduced by
Haberman and Tataru in \cite{HT}.

Let $\zeta\in \mathbb{C}^n, n\geq 3$ and let $p_\zeta$ denote the polynomial
$$
           p_\zeta(\xi)=-|\xi|^2+2i\zeta\cdot\xi,
$$
which is the symbol of the operator $\Delta_\zeta=\Delta+2\zeta\cdot \nabla.$
For any $b\in \mathbb{R}$, we define spaces $\dot{X}^b_\zeta$ by the norm
$$
     \|u\|_{\dot{X}^b_\zeta}=\||p_\zeta(\xi)|^b \hat{u}(\xi)\|_{L^2}.
$$
We will only use the cases where $b\in \{1/2,-1/2\}$. Note that $\dot{X}^{-1/2}_\zeta$ can be identified as the dual space of  $\dot{X}^{1/2}_\zeta$. One feature of these spaces is that the operator $\Delta_\zeta^{-1}$ is a bounded linear operator from  $\dot{X}^{-1/2}_\zeta$ to $\dot{X}^{1/2}_\zeta$ with norm
$$
     \|\Delta_\zeta^{-1}\|_{\mathcal{L}\left(\dot{X}^{-1/2}_\zeta\rightarrow \dot{X}^{1/2}_\zeta\right)}=1.
$$

Assume that the conductivity $\gamma$ is in the space $C^{1,\varepsilon}(\overline\Omega), 0<\varepsilon<1$ with  $\|\gamma_j\|_{C^{1,\varepsilon}(\overline\Omega)}\leq M$ and $\gamma_j(x)>1/M$. By Proposition 2.4 in \cite{CGR}, there exist a ball $B=B(0,R)$ with radius $R>0$ and $\sigma$ in  $ C^{1,\varepsilon}(\mathbb{R}^n)$ such that $\overline \Omega\subset B$, $\gamma=\sigma|_{\overline\Omega}$ and $\supp(\sigma-1) \subset \overline B$.
Now we use the same notation to express the conductivity $\gamma\in C^{1,\varepsilon}(\mathbb{R}^n)$ which satisfies $\|\gamma_j\|_{C^{1,\varepsilon}(\mathbb{R}^n)}\leq M$ and $\gamma_j(x)>1/(2M)$ for all $x\in \mathbb{R}^n$.
Moreover, we can also extend the coefficient $k^2+D$ by zero to $\mathbb{R}^n\setminus \Omega$.
Hence the equation (\ref{helm}) can be extended to $\mathbb{R}^n$ in the following sense:
\begin{align}\label{helm1}
     \Delta v +Q(x) v(x)=0\ \ \hbox{in $\mathbb{R}^n$},
\end{align}
where  $Q=q+ \left(k^2+D\right)\gamma^{-1}$ with
$q=-\frac{\Delta\sqrt{\gamma}}{\sqrt{\gamma}}$. Suppose that the CGO
solutions of (\ref{helm1}) has the form
$$
     v=e^{\zeta\cdot x}(1+\psi(x))\ \ \hbox{in $\mathbb{R}^n$}
$$
with $\zeta\in \mathbb{C}^n$ satisfies $\zeta\cdot\zeta=0$.
We define
$$
     \langle Q| \phi_1\rangle :=\int_{\mathbb{R}^n} \nabla \gamma^{1/2}\cdot \nabla\left(\gamma^{-1/2}\phi_1\right) dx+\int_{\Omega} \left(k^2+D\right)\gamma^{-1}\phi_1 dx
$$
and
$$
     \langle m_Q \phi_1| \phi_2\rangle :=\int_{\mathbb{R}^n} \nabla \gamma^{1/2}\cdot \nabla\left(\gamma^{-1/2}\phi_1\phi_2\right) dx+\int_{\Omega} \left(k^2+D\right)\gamma^{-1}\phi_1\phi_2 dx
$$
for any $\phi_1,\phi_2\in C^\infty_0(\mathbb{R}^n)$.
In order to construct CGO solutions for (\ref{helm1}), the remainder $\psi$
need to satisfy
$$
     \Delta_\zeta \psi+m_Q \psi=-Q\ \ \hbox{in $\mathbb{R}^n$}
$$
with $\zeta\cdot\zeta=0$. The existence of $\psi$ is proved in the following theorem.
\begin{theorem}\label{CGOestimate}
     Suppose that $k\geq 1$. Let $\zeta\in \mathbb{C}^n$ satisfy $\zeta\cdot \zeta=0$. Then there exists a positive constant $C_*$ depending only on $n,\Omega$ and $M$ such that if
         $$
              |\zeta|\geq C_* k^2,
         $$
     then there exists a unique solution $\psi\in \dot X^{1/2}_\zeta$ to the equation
\begin{align}\label{Q}
     \Delta_\zeta \psi+m_Q \psi=-Q
\end{align}
    satisfying the estimate
    $$
         \|\psi\|_{\dot X^{1/2}_\zeta}\leq C \|Q\|_{\dot X^{-1/2}_\zeta},
    $$
where $C$ is independent of $k$.
\end{theorem}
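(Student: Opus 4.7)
The plan is to recast equation (\ref{Q}) as a fixed-point equation in $\dot X^{1/2}_\zeta$. Since $\Delta_\zeta^{-1}$ is bounded from $\dot X^{-1/2}_\zeta$ to $\dot X^{1/2}_\zeta$ with norm $1$, (\ref{Q}) is equivalent to
\[\psi=T\psi:=-\Delta_\zeta^{-1}Q-\Delta_\zeta^{-1}(m_Q\psi).\]
Existence and uniqueness will follow from the Banach fixed-point theorem as soon as the multiplier $m_Q\colon \dot X^{1/2}_\zeta\to\dot X^{-1/2}_\zeta$ can be shown to have operator norm at most $1/2$ when $|\zeta|\ge C_*k^2$. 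Once such smallness is in hand, $I+\Delta_\zeta^{-1}m_Q$ is invertible on $\dot X^{1/2}_\zeta$ by Neumann series, producing a unique $\psi\in\dot X^{1/2}_\zeta$ with $T\psi=\psi$, and the triangle inequality together with $\|\Delta_\zeta^{-1}Q\|_{\dot X^{1/2}_\zeta}\le\|Q\|_{\dot X^{-1/2}_\zeta}$ yields $\|\psi\|_{\dot X^{1/2}_\zeta}\le 2\|Q\|_{\dot X^{-1/2}_\zeta}$, with constant independent of $k$, as required.

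The core step is therefore to prove this multiplier bound. I would split $m_Q=m_q+m_V$ according to the two integrals in the definition of $\langle m_Q\phi_1|\phi_2\rangle$, so that $V=(k^2+D)\gamma^{-1}$. For $m_q$, although $q=-\Delta\sqrt\gamma/\sqrt\gamma$ is only a distribution, the bilinear form $\int\nabla\gamma^{1/2}\cdot\nabla(\gamma^{-1/2}\phi_1\phi_2)\,dx$ allows one to distribute a derivative onto the test function; combining the Haberman--Tataru-type multiplier estimate with the $C^{1,\varepsilon}$ regularity of $\gamma$ then delivers a bound $\|m_q\phi\|_{\dot X^{-1/2}_\zeta}\le C(M)|\zeta|^{-\mu}\|\phi\|_{\dot X^{1/2}_\zeta}$ for some $\mu=\mu(\varepsilon)>0$. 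For $m_V$, the potential $V$ is bounded and compactly supported in $\overline\Omega$ with $\|V\|_{L^\infty}\le C(M)(k^2+1)$, and an Agmon--H\"ormander type multiplier estimate in the Bourgain spaces gives $\|m_V\phi\|_{\dot X^{-1/2}_\zeta}\le C(M,\Omega)\,k^2|\zeta|^{-1}\|\phi\|_{\dot X^{1/2}_\zeta}$. Choosing $C_*=C_*(n,\Omega,M,\varepsilon)$ large enough makes $|\zeta|^{-\mu}+k^2|\zeta|^{-1}\le 1/2$ whenever $|\zeta|\ge C_*k^2$ and $k\ge 1$.

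The main obstacle is obtaining the sharp decay $|\zeta|^{-1}$ (rather than the na\"ive $|\zeta|^{-1/2}$) in the $m_V$ bound: only this stronger form is compatible with the threshold $|\zeta|\ge C_*k^2$ stated in the theorem, since the coefficient of $V$ already grows like $k^2$. This forces one to fully exploit the compact support of $V$ against the characteristic variety of $p_\zeta$, going beyond what a naive $L^\infty$-multiplier estimate would give. The $m_q$ piece is comparatively routine once the bilinear form is used to handle the distributional nature of $q$, but one must check that both the implicit constant $C(M)$ and the exponent $\mu(\varepsilon)$ are independent of $k$.
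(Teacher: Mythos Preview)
Your proposal is correct and follows essentially the same route as the paper: reduce to a Neumann series in $\dot X^{1/2}_\zeta$, split $m_Q$ into the $q$-part and the $(k^2+D)\gamma^{-1}$-part, and bound each separately to get $\|m_Q\|_{\mathcal L(\dot X^{1/2}_\zeta\to\dot X^{-1/2}_\zeta)}\le C\bigl(|\zeta|^{-1}+|\zeta|^{-\varepsilon/(1+\varepsilon)}+k^2|\zeta|^{-1}\bigr)$, which is then made $<1$ by taking $|\zeta|\ge C_*k^2$. The paper carries out the $m_q$ estimate exactly via the mollification argument you allude to (writing $\gamma^{-1/2}\nabla\gamma^{1/2}=\Psi_h*(\cdot)+\text{remainder}$ and optimizing in $h$), arriving at the explicit exponent $\mu=\varepsilon/(1+\varepsilon)$.

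One comment: the step you flag as the ``main obstacle''---getting $|\zeta|^{-1}$ rather than $|\zeta|^{-1/2}$ for $m_V$---is in fact the easiest part. Since $m_V$ is tested bilinearly, one simply uses the localized embedding $\|\chi u\|_{L^2}\le C|\zeta|^{-1/2}\|u\|_{\dot X^{1/2}_\zeta}$ (Corollary~2.1 in Haberman--Tataru, valid for compactly supported cutoffs $\chi$) \emph{twice}, once for $\phi_1$ and once for $\phi_2$, to obtain $|\langle m_V\phi_1|\phi_2\rangle|\le \|V\|_{L^\infty}\|\phi_1\|_{L^2(\Omega)}\|\phi_2\|_{L^2(\Omega)}\le Ck^2|\zeta|^{-1}\|\phi_1\|_{\dot X^{1/2}_\zeta}\|\phi_2\|_{\dot X^{1/2}_\zeta}$. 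No further exploitation of the characteristic variety is needed beyond what is already built into that corollary. The genuinely delicate piece is the first-order term $\int\gamma^{-1/2}\nabla\gamma^{1/2}\cdot\nabla(\phi_1\phi_2)\,dx$, where the derivative on the test functions costs a factor of $|\zeta|$ and only the H\"older regularity of $\nabla\gamma$ recovers decay.
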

\begin{proof}
      By using the Neumann series argument (see \cite{SU1}), we can show the existence of $\psi\in \dot{X}^{1/2}_\zeta$ which satisfies
     $$
      \|\psi\|_{\dot{X}^{1/2}_\zeta}\leq    \|(I+\Delta_\zeta^{-1}\left(m_Q\right))^{-1}\|_{\mathcal{L}\left(\dot{X}^{1/2}_\zeta\rightarrow \dot{X}^{1/2}_\zeta\right)}   \|\Delta_\zeta^{-1}(-Q)\|_{\dot{X}^{1/2}_\zeta}
     $$
     for $|\zeta|$ large enough and
     \begin{align}\label{psi}
     \|m_Q\|_{\mathcal{L}\left(\dot X^{1/2}_\zeta\rightarrow  \dot X^{-1/2}_\zeta \right) }< 1.
     \end{align}
     To prove (\ref{psi}),
     let $\phi_1, \phi_2\in \dot X^{1/2}_\zeta$. To estimate
     \begin{align*}
     \langle m_Q\phi_1|\phi_2\rangle &=\int_{\mathbb{R}^n} \nabla \gamma^{1/2}\cdot \nabla\left(\gamma^{-1/2}\phi_1\phi_2\right) dx
     +\int_{\Omega}\left(k^2+D\right)\gamma^{-1}\phi_1\phi_2 dx\\
     &=\int_{\mathbb{R}^n} \nabla \gamma^{1/2}\cdot \nabla\gamma^{-1/2} \phi_1\phi_2 dx+\int_{\mathbb{R}^n} \gamma^{-1/2}\nabla \gamma^{1/2}\cdot \nabla(\phi_1\phi_2) dx\\
     &\quad+\int_{\Omega} \left(k^2+D\right)\gamma^{-1}\phi_1\phi_2 dx\\
     &=: I_1+I_2+I_3.
     \end{align*}
     The terms $I_1$ and $I_3$ are bounded by
      \begin{align*}
          |I_1|\leq  \frac{C}{|\zeta|} \|\phi_1\|_{\dot X^{1/2}_\zeta}\|\phi_2\|_{\dot X^{1/2}_\zeta}
     \end{align*}
     and
     \begin{align*}
          |I_3|\leq C \frac{k^2}{|\zeta|} \|\phi_1\|_{\dot X^{1/2}_\zeta}\|\phi_2\|_{\dot X^{1/2}_\zeta}
     \end{align*}
     by Corollary 2.1 in \cite{HT}. Here $C$ is independent of $k$.

     Let $\Psi$ be a smooth function in $\mathbb{R}^n$ with support in the unit ball and $\int \Psi dx=1$. Define  $\Psi_h=h^{-n}\Psi(x/h)$ with $h>0$. We consider
     \begin{align*}
     |I_2|& \leq \left|\int_{\mathbb{R}^n} \Psi_h *\left(\gamma^{-1/2}\nabla \gamma^{1/2}\right)\cdot \nabla(\phi_1\phi_2) dx\right|\\
     &\quad+\left|\int_{\mathbb{R}^n} \left(\Psi_h *(\gamma^{-1/2}\nabla \gamma^{1/2})-\gamma^{-1/2}\nabla \gamma^{1/2}\right)\cdot \nabla(\phi_1\phi_2) dx\right|.
     \end{align*}
     By Lemma 2.3 in \cite{HT}, we get
     \begin{align*}
     \left|\int_{\mathbb{R}^n} \Psi_h *\left(\gamma^{-1/2}\nabla \gamma^{1/2}\right)\cdot \nabla(\phi_1\phi_2) dx\right|
     \leq  C|\zeta|^{-1} h^{-1}  \|\phi_1\|_{\dot X^{1/2}_\zeta}\|\phi_2\|_{\dot X^{1/2}_\zeta}
     \end{align*}
     and
     \begin{align*}
     &\left|\int_{\mathbb{R}^n} \left(\Psi_h *(\gamma^{-1/2}\nabla \gamma^{1/2})-\gamma^{-1/2}\nabla \gamma^{1/2}\right)\cdot \nabla(\phi_1\phi_2) dx\right|\\
     &\leq \|\Psi_h *(\gamma^{-1/2}\nabla \gamma^{1/2})-\gamma^{-1/2}\nabla \gamma^{1/2}\|_{L^\infty(\mathbb{R}^n)}  \|\phi_1\|_{\dot X^{1/2}_\zeta}\|\phi_2\|_{\dot X^{1/2}_\zeta}\\
     &\leq h^\varepsilon \|\phi_1\|_{\dot X^{1/2}_\zeta}\|\phi_2\|_{\dot X^{1/2}_\zeta},
     \end{align*}
     where the last inequality is based on the fact that $ \gamma\in C^{1+\varepsilon}(\mathbb{R}^n)$.
     Let $h=|\zeta|^{-1/(1+\varepsilon)}$, then
     \begin{align}\label{mQ}
          \|m_Q\|_{\mathcal{L}\left(\dot X^{1/2}_\zeta\rightarrow  \dot X^{-1/2}_\zeta  \right)}
          \leq
          C\left(|\zeta|^{-1}+|\zeta|^{-\varepsilon/(1+\varepsilon)}+k^2|\zeta|^{-1}\right),
     \end{align}
     where $C$ depends on $n, \Omega$ and $M$. Let $|\zeta|\geq C_* k^2$ with $C_*>0$ sufficiently large, we have
     $$
     \|m_Q\|_{\mathcal{L}\left(\dot X^{1/2}_\zeta\rightarrow  \dot X^{-1/2}_\zeta  \right)}<1.
     $$
     Hence the solution of (\ref{Q}) exists.
\end{proof}

In order to show that the remainder $\psi$ goes to zero as $|\zeta|\rightarrow \infty$, we need to have $\|Q\|_{\dot X^{-1/2}_\zeta}\rightarrow 0$ as $|\zeta|\rightarrow\infty$.
If $\gamma$ is smooth enough, one can prove
\begin{align}\label{q12}
\|Q\|_{\dot X^{-1/2}_\zeta}\leq C|\zeta|^{-\varepsilon}
\end{align}
for $\varepsilon\in (0,1/2]$ and $C$ is independent of $\zeta$.
However, (\ref{q12}) fails for less regular conductivities.
Hence, Haberman and Tataru established the estimate $\|Q\|_{\dot X^{-1/2}_\zeta}$ decays on average for some choices of $\zeta$.

Let $r\geq 0$ and $\eta\in S^{n-1}$. We set
$$
      \zeta=\tau\eta_1+i(\beta-r\eta/2),
$$
where $\tau>0$, $\beta\in \mathbb{R}^n$ and $\eta_1\in S^{n-1}$ satisfy $\eta_1\cdot \beta=\eta_1\cdot \eta=\eta \cdot \beta=0$.
The vector $\zeta$ is chosen so that $\zeta\cdot \zeta=0$ and $|\zeta|^2=2\tau^2$. Then we have the following Lemma.
\begin{lemma}\label{intQ}
     Suppose that $k\geq 1$. Then
     \begin{align*}
     \frac{1}{\lambda}\int_{S^{n-1}} \int^{2\lambda}_\lambda \|Q\|^2_{\dot X^{-1/2}_\zeta} d\tau d\eta_1
           \leq C \left( \left(1+\langle r\rangle^2\lambda^{-1}\right)\lambda^{-\varepsilon/(1+\varepsilon)} +k^4\lambda^{-1}  \right)
     \end{align*}
     if $\lambda$ is sufficiently large.
\end{lemma}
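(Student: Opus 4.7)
The plan is to decompose $Q = q + V$, where $V = (k^2+D)\gamma^{-1}$ is extended by zero outside $\Omega$, and to treat the two pieces by different methods. The potential $V$ is an $L^\infty$ function with compact support in $\overline\Omega$ satisfying $\|V\|_{L^2(\mathbb{R}^n)} \leq Ck^2$, while $q = -\gamma^{-1/2}\Delta\sqrt{\gamma}$ with $\gamma \in C^{1,\varepsilon}$ is only a distribution and therefore requires a mollification argument.

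For the bounded piece $V$, Plancherel and Fubini reduce the averaged quantity to an averaged kernel estimate:
\begin{equation*}
\frac{1}{\lambda}\int_{S^{n-1}}\int_\lambda^{2\lambda}\|V\|^2_{\dot X^{-1/2}_\zeta}\,d\tau\,d\eta_1 = \int_{\mathbb{R}^n}|\widehat V(\xi)|^2 K_\lambda(\xi)\,d\xi, \qquad K_\lambda(\xi) := \frac{1}{\lambda}\int_{S^{n-1}}\int_\lambda^{2\lambda}\frac{d\tau\,d\eta_1}{|p_\zeta(\xi)|}.
\end{equation*}
Direct analysis of the zero set of $p_\zeta$ along the lines of Haberman--Tataru \cite{HT} yields $K_\lambda(\xi) \leq C\lambda^{-1}$ uniformly in $\xi$, which combined with $\|\widehat V\|_{L^2} \leq Ck^2$ produces the $k^4 \lambda^{-1}$ contribution in the lemma.

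For the singular piece $q$, I would imitate the mollification argument used in the proof of Theorem \ref{CGOestimate}: fix a standard approximate identity $\Psi_h$ and split
\begin{equation*}
q = -\gamma^{-1/2}\,\Delta(\Psi_h * \sqrt{\gamma}) - \gamma^{-1/2}\,\Delta\bigl((I - \Psi_h*)\sqrt{\gamma}\bigr).
\end{equation*}
The first summand is smooth; after an integration by parts moving one derivative onto the test function in the dual of $\dot X^{-1/2}_\zeta$, its averaged squared norm is controlled by $C(1+\langle r\rangle^2\lambda^{-1})h^{-2}\lambda^{-1}$, the factor $\langle r\rangle^2\lambda^{-1}$ reflecting the fact that the averaged kernel $K_\lambda$ loses its $\lambda^{-1}$ decay at frequencies of size $\lesssim \langle r\rangle$ because of the imaginary shift $\beta - r\eta/2$. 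The second summand is $O(h^\varepsilon)$ in $L^\infty$, with compact support, by the $C^{1,\varepsilon}$ regularity of $\sqrt{\gamma}$; invoking the same kernel bound its averaged squared norm is $\leq C(1+\langle r\rangle^2\lambda^{-1}) h^{2\varepsilon}$. Optimizing at $h = \lambda^{-1/(1+\varepsilon)}$ balances the two contributions and produces the announced $(1+\langle r\rangle^2\lambda^{-1})\lambda^{-\varepsilon/(1+\varepsilon)}$ decay.

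The main obstacle is the refined computation of the kernel $K_\lambda$ with the shifted imaginary part $\mathrm{Im}\,\zeta = \beta - r\eta/2$. When $r = 0$ and $\beta = 0$ this reduces to the setup of Lemmas 3.1--3.2 in \cite{HT}; for $r > 0$ one must integrate $|p_\zeta(\xi)|^{-1}$ over $\eta_1 \in S^{n-1} \cap \{\beta, \eta\}^\perp$ and over $\tau \in [\lambda,2\lambda]$, and track how the displacement $-r\eta/2$ creates the extra low-frequency mass of order $\langle r\rangle^2/\lambda$ that appears in the final bound. Once this refined kernel estimate is in hand, the assembly of the two error contributions is routine.
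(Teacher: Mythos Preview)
Your overall plan---split $Q$ into the bounded potential $V$ and the singular piece $q$, handle $V$ via the uniform kernel bound $K_\lambda\leq C\lambda^{-1}$, and attack $q$ by mollification combined with the Haberman--Tataru averaged estimate---is exactly the strategy of the paper. The paper, however, first rewrites
\[
q=-\tfrac14|\nabla\log\gamma|^2-\tfrac12\,\Delta\log\gamma,
\]
and mollifies $\nabla\log\gamma$ rather than $\sqrt\gamma$. This has the advantage that the bounded term $|\nabla\log\gamma|^2$ is cleanly separated, and the divergence-form piece $\varphi\,\Delta\log\gamma=\varphi\,\nabla\cdot(\nabla\log\gamma)$ has no rough prefactor to commute past.

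There are two concrete gaps in your write-up.

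\textbf{The remainder bound is not justified as stated.} You claim the second summand $-\gamma^{-1/2}\Delta\bigl((I-\Psi_h*)\sqrt\gamma\bigr)$ is $O(h^\varepsilon)$ in $L^\infty$. It is not: the function $(I-\Psi_h*)\sqrt\gamma$ is $O(h^{1+\varepsilon})$ in $L^\infty$ and its \emph{gradient} is $O(h^\varepsilon)$, but its Laplacian is not a bounded function at all when $\gamma$ is only $C^{1,\varepsilon}$. What actually works is to view the remainder as a divergence, $\Delta\bigl((I-\Psi_h*)\sqrt\gamma\bigr)=\nabla\cdot F_h$ with $F_h=(I-\Psi_h*)\nabla\sqrt\gamma$, $\|F_h\|_{L^2}\leq Ch^\varepsilon$, and then invoke the divergence-form part of Lemma~3.1 in \cite{HT}, which is what produces the factor $(1+\langle r\rangle^2\lambda^{-1})$ in your bound. (Indeed, had the remainder genuinely been $L^\infty$ of size $h^\varepsilon$, the kernel bound $K_\lambda\leq C\lambda^{-1}$ would have given $\lambda^{-1}h^{2\varepsilon}$, not $(1+\langle r\rangle^2\lambda^{-1})h^{2\varepsilon}$, so your stated bound is inconsistent with your stated reason.) You will also need to commute the prefactor $\gamma^{-1/2}$ past the divergence, which generates a harmless lower-order term; the paper's use of $\log\gamma$ sidesteps this.

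\textbf{The optimization is off by a factor of two.} Balancing $h^{-2}\lambda^{-1}$ against $h^{2\varepsilon}$ forces $h=\lambda^{-1/(2+2\varepsilon)}$, not $h=\lambda^{-1/(1+\varepsilon)}$; with your choice the smooth contribution $h^{-2}\lambda^{-1}=\lambda^{(1-\varepsilon)/(1+\varepsilon)}$ blows up. The paper's choice $h=\lambda^{-1/(2+2\varepsilon)}$ gives both pieces equal to $\lambda^{-\varepsilon/(1+\varepsilon)}$, which is the exponent in the statement.
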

\begin{proof}
Let $\varphi\in C^\infty_0(\mathbb{R}^n)$ and $\varphi=1$ on $B$.
By the definition of $Q$, for $\phi\in \dot X^{1/2}_\zeta$, we have
\begin{align*}
     |\langle Q| \phi\rangle| &=\left|\int_{\mathbb{R}^n} \nabla \gamma^{1/2}\cdot \nabla\left(\gamma^{-1/2}\phi\right) dx+\int_{\Omega} \left(k^2+D\right) \gamma^{-1}\phi dx\right|\\
     &= \left|\frac{1}{4}\int_{\mathbb{R}^n} |\nabla\log\gamma|^2\phi dx+\frac{1}{2}\int_{\mathbb{R}^n}\nabla\log \gamma\cdot \nabla\phi dx+ \int_{\Omega}\left(k^2+D\right) \gamma^{-1} \phi dx\right|\\
     &\leq  C \left(|\zeta|^{-1/2}\sum^n_{j=1}\|\partial_j\log\gamma\|^2_{L^\infty(\mathbb{R}^n)} +\| \varphi\Delta\log \gamma\|_{\dot X^{-1/2}_\zeta} +k^2 |\zeta|^{-1/2}\right)\|\phi\|_{\dot X^{1/2}_\zeta}.
\end{align*}
Note that the last inequality is obtained by using Lemma 2.2 in \cite{HT}.
Thus,
\begin{align}\label{lemmaq}
     \|Q\|_{\dot X^{-1/2}_\zeta}\leq  C\left(|\zeta|^{-1/2} +\| \varphi\Delta\log \gamma\|_{\dot X^{-1/2}_\zeta} +k^2 |\zeta|^{-1/2}\right)
\end{align}
with $C $ depends on $n,\Omega$ and $M$.
To estimate the second term of (\ref{lemmaq}), we consider
\begin{align*}
     &\frac{1}{\lambda}\int_{S^{n-1}} \int^{2\lambda}_\lambda\|\varphi \Delta\log\gamma\|^2_{\dot X^{-1/2}_\zeta} d\tau d\eta_1 \\
     &\leq C\frac{1}{\lambda}\int_{S^{n-1}} \int^{2\lambda}_\lambda \| \varphi \nabla\cdot( \Psi_h*\nabla\log\gamma)\|^2_{\dot X^{-1/2}_\zeta}   d\tau d\eta_1\\
     &\quad+  C\frac{1}{\lambda}\int_{S^{n-1}} \int^{2\lambda}_\lambda   \| \varphi \nabla\cdot( \Psi_h*\nabla\log\gamma-\nabla\log\gamma)\|^2_{\dot X^{-1/2}_\zeta}  d\tau d\eta_1.
\end{align*}
We deduce
\begin{align*}
&\frac{1}{\lambda}\int_{S^{n-1}} \int^{2\lambda}_\lambda \| \varphi \nabla\cdot( \Psi_h*\nabla\log\gamma)\|^2_{\dot X^{-1/2}_\zeta}   d\tau d\eta_1\\
&\leq C \frac{1}{\lambda}\|  \nabla\cdot( \Psi_h*\nabla\log\gamma)\|^2_{L^2(\mathbb{R}^n)}
\leq C\frac{1}{h^2\lambda} \sum^n_{j=1}\|\partial_j\log\gamma\|^2_{L^\infty(\mathbb{R}^n)}
\end{align*}
and
\begin{align*}
& C\frac{1}{\lambda}\int_{S^{n-1}} \int^{2\lambda}_\lambda   \| \varphi \nabla\cdot( \Psi_h*\nabla\log\gamma-\nabla\log\gamma)\|^2_{\dot X^{-1/2}_\zeta}  d\tau d\eta_1\\
 &\leq C  \left(1+\langle r\rangle^2\lambda^{-1}\right) \| \Psi_h*\nabla\log\gamma-\nabla\log\gamma\|^2_{L^2(\mathbb{R}^n)}\leq C  \left(1+\langle r\rangle^2\lambda^{-1}\right) h^{2\varepsilon}.
\end{align*}
from Lemma 3.1 in \cite{HT}. Note that $\langle r\rangle=(1+r^2)^{1/2}$.
Summing up, we have
\begin{align*}
         \frac{1}{\lambda}\int_{S^{n-1}} \int^{2\lambda}_\lambda \|Q\|^2_{\dot X^{-1/2}_\zeta} d\tau d\eta_1
    \leq  C\left(\lambda^{-1} +h^{-2}\lambda^{-1}
    + \left(1+\langle r\rangle^2\lambda^{-1}\right) h^{2\varepsilon}+ k^4\lambda^{-1}\right).
\end{align*}
Taking $h=\lambda^{-1/(2+2\varepsilon)}$, since $0<\varepsilon<1$ and $\lambda \geq 1$, we complete the proof.

\end{proof}

\section{A boundary integral estimate}
In this section we derive a useful boundary integral estimate which is similar to Lemma 2.1 in \cite{CGR}.
\begin{lemma}
     Let $\gamma_j\in C^{1,\varepsilon}(\mathbb{R}^n)$ be two given functions with positive lower bound and $\supp(\gamma_j-1)\subset \overline B$ for $j=1,2$. Suppose that $\supp(D_1-D_2)\subset \Omega$. Then for any $u_j\in H^1_{loc}(\mathbb{R}^n)$ weak solution of $\nabla\cdot (\gamma_j\nabla u_j)+D_j u_j+k^2u_j=0$ in $\mathbb{R}^n$, one has
     \begin{align}\label{int}
         & \Big|\int_{\mathbb{R}^n} \nabla \gamma_2^{1/2}\cdot \nabla \left(\gamma_2^{-1/2}v_1v_2\right)dx-  \int_{\mathbb{R}^n} \nabla \gamma_1^{1/2}\cdot \nabla \left(\gamma_1^{-1/2}v_1v_2\right)dx   \notag\\
         &+\int_{B}\left(\left(k^2+D_2\right)\gamma_2^{-1}-\left(k^2+D_1\right)\gamma_1^{-1}\right)v_1v_2 dx  \Big|   \notag\\
          &\leq  \left(\|\Lambda_{\gamma_1}-\Lambda_{\gamma_2}\|_*+ \|\gamma_1- \gamma_2\|_{L^\infty(B\setminus\Omega)}\right)\|u_1\|_{H^1(B)}\|u_2\|_{H^1(B)}
     \end{align}
with $v_j=\gamma_j^{1/2}u_j$.
\end{lemma}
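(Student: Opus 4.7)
The plan is to interpret the LHS of (\ref{int}) as (a weak form of) $|\int_B (Q_2-Q_1) v_1 v_2\, dx|$, then use Green's identity to reduce it first to a boundary integral on $\partial B$ and then to one on $\partial\Omega$ via an auxiliary integration by parts on the annulus $B\setminus\Omega$. The $\partial\Omega$ integral will be exactly the $\Lambda_{\gamma_1}-\Lambda_{\gamma_2}$ pairing of the two traces, while the annulus step produces only the controlled error weighted by $\|\gamma_1-\gamma_2\|_{L^\infty(B\setminus\Omega)}$. Recognizing LHS $= |\int_B (Q_2-Q_1) v_1 v_2\, dx|$ is immediate from the definition of $\langle Q_j|\cdot\rangle$ in Section~3, since $\nabla \gamma_j^{1/2}$ is supported in $\overline B$ and $D_j$ in $\Omega$.

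For the first step I would verify that $v_j=\gamma_j^{1/2}u_j$ satisfies the weak Helmholtz equation $\int \nabla v_j\cdot\nabla\tilde\phi\,dx=\langle m_{Q_j}v_j|\tilde\phi\rangle$ for $\tilde\phi\in C_0^\infty(\mathbb{R}^n)$; this follows from the weak equation for $u_j$ by substituting $u_j=\gamma_j^{-1/2}v_j$ and choosing the test function $\phi=\gamma_j^{-1/2}\tilde\phi$. Using (a cutoff of) $\tilde\phi=v_{3-j}$ on $B$ in this weak equation and subtracting the $j=1$ and $j=2$ versions yields the Helmholtz-Alessandrini identity
\[
\int_B(Q_2-Q_1)v_1v_2\,dx = \int_{\partial B}(v_2\partial_\nu v_1-v_1\partial_\nu v_2)\,dS.
\]
Because $\gamma_j\in C^{1,\varepsilon}$ and $\supp(\gamma_j-1)\subset\overline B$, continuity of $\gamma_j$ and $\nabla\gamma_j$ forces $\gamma_j=1$ and $\nabla\gamma_j=0$ on $\partial B$, so $v_j=u_j$ and $\partial_\nu v_j=\partial_\nu u_j$ there.

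For the second step I would apply Green's identity to the pair $(u_1,u_2)$ on the annulus $B\setminus\Omega$, where $D_j=0$ and $u_j$ solves $\nabla\cdot(\gamma_j\nabla u_j)+k^2u_j=0$. The $k^2$ terms cancel upon subtraction, and I obtain
\[
\int_{\partial B}(u_2\partial_\nu u_1-u_1\partial_\nu u_2)\,dS=\int_{\partial\Omega}(u_2\gamma_1\partial_\nu u_1-u_1\gamma_2\partial_\nu u_2)\,dS+\int_{B\setminus\Omega}(\gamma_1-\gamma_2)\nabla u_1\cdot\nabla u_2\,dx.
\]
On $\partial\Omega$, $\gamma_j\partial_\nu u_j=\Lambda_{\gamma_j}(u_j|_{\partial\Omega})$; using the self-adjointness of $\Lambda_{\gamma_2}$ (valid since $k$ is not a Dirichlet eigenvalue and the coefficients are real), the $\partial\Omega$ integral collapses to $\int_{\partial\Omega}u_2(\Lambda_{\gamma_1}-\Lambda_{\gamma_2})u_1\,dS$. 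Bounding this boundary term by $\|\Lambda_{\gamma_1}-\Lambda_{\gamma_2}\|_*\|u_1\|_{H^1(B)}\|u_2\|_{H^1(B)}$ via the trace inequality, and the annulus integral by $\|\gamma_1-\gamma_2\|_{L^\infty(B\setminus\Omega)}\|u_1\|_{H^1(B)}\|u_2\|_{H^1(B)}$, yields (\ref{int}).

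The main technical obstacle is the low regularity of $\gamma_j$: since $\gamma_j$ is only $C^{1,\varepsilon}$, the potential $q_j=-\Delta\gamma_j^{1/2}/\gamma_j^{1/2}$ is merely distributional, and the ``Green identity for $\Delta v_j+Q_jv_j=0$'' cannot be carried out pointwise. Everything must be recast rigorously through the weak pairings $\langle Q_j|\cdot\rangle$ and $\langle m_{Q_j}\cdot|\cdot\rangle$ of Section~3. In particular, the choice $\tilde\phi=v_{3-j}$ in the weak equation for $v_j$ must be justified by approximation with compactly supported test functions, which works because the distributional part of $Q_j$ is concentrated in $\overline B$ and the trace of $v_j=u_j$ on $\partial B$ is well-defined as an element of $H^{1/2}(\partial B)$.
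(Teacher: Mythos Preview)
Your proof is correct and uses the same ingredients as the paper (integration by parts, symmetry of $\Lambda_{\gamma_j}$, $\gamma_j=1$ with $\nabla\gamma_j=0$ on $\partial B$, and $D_1=D_2$ on $B\setminus\Omega$), but the organization is reversed. The paper starts from the pairing $\langle\Lambda_{\gamma_j}u_j,u_k\rangle$, expands it via the weak form of the \emph{conductivity} equation on $B$, subtracts, and then uses the algebraic identity
\[
\int_B\gamma_j\nabla(\gamma_j^{-1/2}v_j)\cdot\nabla(\gamma_j^{-1/2}v_k)\,dx
=\int_B\nabla v_j\cdot\nabla v_k\,dx-\int_B\nabla\gamma_j^{1/2}\cdot\nabla(\gamma_j^{-1/2}v_jv_k)\,dx
\]
to arrive at the LHS of (\ref{int}). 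You instead start from the LHS, pass to $\partial B$ via Green's identity for the \emph{Helmholtz} form $\Delta v_j+Q_jv_j=0$, and then cross the annulus to $\partial\Omega$ with a second Green step for the conductivity equation. The paper's route has the mild advantage that it never invokes the equation $\Delta v_j+Q_jv_j=0$ directly, so it sidesteps the distributional-$Q_j$ issue you (correctly) flag at the end; everything stays at the level of first derivatives of $\gamma_j$. One small slip: in your annulus step you only need $D_1=D_2$ there (which is exactly the hypothesis $\supp(D_1-D_2)\subset\Omega$), not $D_j=0$ individually.
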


\begin{proof}
Since $\gamma_1=\gamma_2$ in $\mathbb{R}^n \setminus B$, applying integration by parts twice, we obtain that
\begin{align*}
     \langle  \Lambda_{\gamma_j}u_j, u_k \rangle
     &=  \int_B \gamma_j\nabla u_j\cdot \nabla u_k dx-\int_{B\setminus\Omega} \gamma_j\nabla u_j\cdot \nabla u_k dx-\int_\Omega \left(k^2+D_j\right)u_j u_k dx\\
     &=  \int_B \gamma_j \nabla u_j\cdot \nabla \left(\gamma_j^{-1/2} v_k\right) dx +\int_B\gamma_j\nabla u_j\cdot\nabla\left(\left(\gamma_k^{-1/2}-\gamma_j^{-1/2}\right)v_k\right)dx \\
     &\quad-\int_{B\setminus\Omega} \gamma_j\nabla u_j\cdot\nabla u_k dx-\int_\Omega \left(k^2+D_j\right)u_j u_k dx\\
     &=  \int_B \gamma_j \nabla u_j\cdot \nabla \left(\gamma_j^{-1/2} v_k\right) dx +\int_B \left(k^2+D_j\right)u_j\left(\gamma_k^{-1/2}-\gamma_j^{-1/2}\right)v_k dx \\
     &\quad-\int_{B\setminus\Omega} \gamma_j\nabla u_j\cdot\nabla u_k dx-\int_\Omega \left(k^2+D_j\right)u_j u_k dx,
\end{align*}
where $v_k=\gamma_k^{1/2}u_k$.
Since $\langle \Lambda_{\gamma_j}f, g\rangle=\langle \Lambda_{\gamma_j} g, f\rangle$, we have
\begin{align*}
     &\langle  \left(\Lambda_{\gamma_1}-\Lambda_{\gamma_2}\right)u_1, u_2 \rangle+ \int_{B\setminus\Omega} (\gamma_1-\gamma_2)\nabla u_1\cdot\nabla u_2 dx\\
     &=  \int_B \gamma_1 \nabla \left(\gamma_1^{-1/2}v_1\right)\cdot \nabla \left(\gamma_1^{-1/2} v_2\right) dx -\int_B \gamma_2 \nabla \left(\gamma_2^{-1/2}v_2\right)\cdot \nabla \left(\gamma_2^{-1/2} v_1\right) dx \\
     &\quad+ \int_B \left(\left(k^2+D_2\right)\gamma_2^{-1}-\left(k^2+D_1\right)\gamma_1^{-1}\right)v_1v_2 dx+\int_{B\setminus\Omega}\left(D_1-D_2\right)u_1u_2dx.
\end{align*}
     Applying $\gamma_1=\gamma_2=1$ outside $\overline B$, $D_1=D_2$ outside $\Omega$ and
\begin{align*}
      \int_B \gamma_1 \nabla \left(\gamma_1^{-1/2}v_1\right)\cdot \nabla \left(\gamma_1^{-1/2} v_2\right) dx
      =\int_B\nabla v_1\cdot \nabla v_2 dx-\int_B \nabla\gamma_1^{1/2}\cdot\nabla\left(\gamma_1^{-1/2}v_1v_2\right)dx,
\end{align*}
  then
  \begin{align*}
     &\langle  (\Lambda_{\gamma_1}-\Lambda_{\gamma_2})u_1, u_2 \rangle+ \int_{B\setminus\Omega} (\gamma_1-\gamma_2)\nabla u_1\cdot\nabla u_2 dx\\
     &= \int_{\mathbb{R}^n} \nabla \gamma_2^{1/2}\cdot \nabla \left(\gamma_2^{-1/2}v_1 v_2\right) dx- \int_{\mathbb{R}^n} \nabla \gamma_1^{1/2}\cdot \nabla \left(\gamma_1^{-1/2}v_1 v_2\right) dx  \\
     &\quad+\int_B \left(\left(k^2+D_2\right)\gamma_2^{-1}-\left(k^2+D_1\right)\gamma_1^{-1}\right)v_1v_2 dx,
\end{align*}
which implies (\ref{int}).

\end{proof}

\section{Proof of the main theorem}
This section is devoted to the proof of Theorem \ref{main}.
Let $r\geq 0$ and $\eta\in S^{n-1}$. We pick two vectors $\beta \in \mathbb{R}^n$, $\eta_1\in S^{n-1}$ such that
$     \beta\cdot\eta_1=\beta\cdot\eta=\eta\cdot\eta_1=0$, $|\tau|^2=|\beta|^2+r^2/4.$
Denote
$$
     \zeta_1=\tau\eta_1+i(\beta-r\eta/2),\ \ \zeta_2=-\tau\eta_1-i(\beta+r\eta/2).
$$
The two vectors $\zeta_l$ are chosen so that
$\zeta_l\cdot\zeta_l=0,$ $\zeta_1+\zeta_2=-ir\eta,$ $ |\zeta_l|^2=2\tau^2$ for $l=1,2.$
By Theorem \ref{CGOestimate}, if $|\zeta_l|\geq C_*k^2$ ($C_*$ is the constant given in Theorem \ref{CGOestimate}), we can construct CGO solutions $v_l$ to the equation (\ref{helm1}) with $Q=Q_l$ of the form
$$
     v_l(x)=e^{\zeta_l\cdot x}(1+\psi_l(x))
$$
with $\|\psi_l\|_{\dot X^{1/2}_{\zeta_l}}\leq C \|Q_l\|_{\dot X^{-1/2}_{\zeta_l}}$ for $l=1,2$ .
We plug CGO solutions $v_l$ into the left hand side of (\ref{int}), then we have
\begin{align}\label{l1}
     &\int_{\mathbb{R}^n} \nabla \gamma_2^{1/2}\cdot \nabla \left(\gamma_2^{-1/2}v_1v_2\right)dx-  \int_{\mathbb{R}^n} \nabla \gamma_1^{1/2}\cdot \nabla \left(\gamma_1^{-1/2}v_1v_2\right)dx   \notag\\
         &+\int_{\Omega} \left(\left(k^2+D_2\right)\gamma_2^{-1}-\left(k^2+D_1\right)\gamma_1^{-1}\right)v_1v_2 dx  \notag\\
        & =\langle  Q_2-Q_1| e^{-ir\eta\cdot x} \rangle+\langle  Q_2-Q_1| e^{-ir\eta\cdot x} (\psi_1+\psi_2)\rangle \notag\\
         &\quad-\langle  m_{Q_1} \psi_1| e^{-ir\eta\cdot x}\psi_2 \rangle+\langle  m_{Q_2} \psi_2| e^{-ir\eta\cdot x}\psi_1 \rangle.
\end{align}
Let $\varphi\in C^\infty_0(\mathbb{R}^n)$ and $\varphi=1$ on $B$. First we estimate
\begin{align}\label{l2}
           |\langle  Q_j| e^{-ir\eta\cdot x} (\psi_1+\psi_2)\rangle|
         &= |\langle  Q_j| \varphi e^{-ir\eta\cdot x} (\psi_1+\psi_2)\rangle| \notag\\
         &\leq   \|Q_j\|_{ X^{-1/2}_{\zeta_j}}\left(\| \varphi e^{-ir\eta\cdot x} \psi_1\|_{ X^{1/2}_{\zeta_1}}+\| \varphi e^{-ir\eta\cdot x} \psi_2\|_{X^{1/2}_{\zeta_2}}  \right) \notag\\
         &\leq C \langle r\rangle^{1/2}   \|Q_j\|_{\dot X^{-1/2}_{\zeta_j}}\left(\| \psi_1\|_{\dot X^{1/2}_{\zeta_1}}+\| \psi_2\|_{\dot X^{1/2}_{\zeta_2}}  \right)
\end{align}
by using Lemma 2.2 in \cite{HT} for $j=1,2$. Since $\gamma_j\in C^{1,\varepsilon}(\mathbb{R}^n)$, by the operator norm (\ref{mQ}) of $m_Q$, we obtain
\begin{align}\label{l3}
               |\langle  m_{Q_1} \psi_1| e^{-ir\eta\cdot x}\psi_2 \rangle|
         &\leq  C\left(\tau^{-1}+\tau^{-\varepsilon/(1+\varepsilon)} +k^2\tau^{-1}\right)\langle r\rangle^{1/2}\| \psi_1\|_{\dot X^{1/2}_{\zeta_1}}\| \psi_2\|_{\dot X^{1/2}_{\zeta_2}}
\end{align}
with $\tau>1$.
Combining (\ref{int}), (\ref{l1}), (\ref{l2}) and (\ref{l3}), it follows that
\begin{align}\label{Q12}
              |\langle  Q_2-Q_1| e^{-ir\eta\cdot x} \rangle|
        &\leq  C \langle r\rangle^{1/2} \sum^2_{l,j=1}  \|Q_j\|_{\dot X^{-1/2}_{\zeta_j}}\| \psi_l\|_{\dot X^{1/2}_{\zeta_l}} \notag\\
        &\quad+ C\left(\tau^{-1}+\tau^{-\varepsilon/(1+\varepsilon)} +k^2\tau^{-1}\right) \langle r\rangle^{1/2}\| \psi_1\|_{\dot X^{1/2}_{\zeta_1}}\| \psi_2\|_{\dot X^{1/2}_{\zeta_2}} \notag\\
        &\quad+\left(\|\Lambda_{\gamma_1}-\Lambda_{\gamma_2}\|_*+ \|\gamma_1- \gamma_2\|_{L^\infty(B\setminus\Omega)}\right)\|u_1\|_{H^1(B)}\|u_2\|_{H^1(B)}.
\end{align}
The estimate (2.6) in \cite{CGR} gives
$$
 \|\gamma_1- \gamma_2\|_{L^\infty(B\setminus\Omega)}\leq \left(\sum_{|\alpha|\leq 1} \|\partial^\alpha\gamma_1- \partial^\alpha\gamma_2\|_{L^\infty(\partial\Omega)}\right).
$$
Moreover, the same arguments in \cite{A} give
$$
\|\gamma_1-\gamma_2\|_{L^\infty(\partial\Omega)}\leq C\|\Lambda_{\gamma_1}-\Lambda_{\gamma_2}\|_*
$$
and
$$
 \sum_{|\alpha|= 1}\|\partial^\alpha \gamma_1-\partial^\alpha \gamma_2\|_{L^\infty(\partial\Omega)}\leq C\|\Lambda_{\gamma_1}-\Lambda_{\gamma_2}\|^\delta_*,\ 0<\delta<1,
$$
where $C$ depends on $n,\Omega,\varepsilon$ and $M$.
Thus
\begin{align}
\|\gamma_1- \gamma_2\|_{L^\infty(B\setminus\Omega)}\leq A^{1/2}
\end{align}
with
$$A=\max\left\{\|\Lambda_{\gamma_1}-\Lambda_{\gamma_2}\|^2_*, \|\Lambda_{\gamma_1}-\Lambda_{\gamma_2}\|^{2\delta}_*\right\}.$$
Recall that $B=B(0,R)$ is a ball with radius $R>0$. To show that
\begin{align}\label{uH1}
\|u_l\|^2_{H^1(B)}\leq  C e^{R|\zeta_l|},
\end{align}
where $|\zeta_l|\geq 1$ and $C$ depends on $n, \Omega$ and $M$.
First we compute
    \begin{align*}
         \|u_l\|^2_{L^2(B)} &=\|\gamma_l^{-1/2}e^{\zeta_l\cdot x}(1+\psi_l)\|_{L^2(B)}^2\\
         &\leq  Ce^{R|\zeta_l|}\left(1+\|\psi_l\|^2_{L^2(B)}\right)\\
         &\leq  Ce^{R|\zeta_l|}\left(1+|\zeta_l|^{-1}\|\psi_l\|^2_{\dot{X}^{1/2}_{\zeta_l}}\right).
    \end{align*}
Using the fact that $|\xi|^2/2\leq |p_\zeta(\xi)|\leq 3|\xi|^2/2$ when $4|\zeta|\leq |\xi|$. We can deduce
\begin{align*}
     \|\nabla \psi_l\|^2_{L^2(B)}
     &\leq   \|\nabla (\phi_B\psi_l)\|^2_{L^2(\mathbb{R}^n)} = \int_{\mathbb{R}^n} |\xi|^2|\widehat{\phi_B\psi_l}|^2 d\xi\\
     &\leq  C  \int_{|\xi|<4|\zeta_l|} |\zeta_l|^2|\widehat{\phi_B\psi_l}|^2 d\xi+  C\int_{|\xi|\geq 4|\zeta_l|} |p_{\zeta_l}(\xi)||\widehat{\phi_B\psi_l}|^2 d\xi\\
     &\leq   C|\zeta_l|^2 \|\phi_B\psi_l\|^2_{L^2(\mathbb{R}^n)} + \|\phi_B\psi_l\|^2_{X^{1/2}_{\zeta_l}}\\
     &\leq   C(|\zeta_l|+1) \|\psi_l\|^2_{\dot{X}^{1/2}_{\zeta_l}}
\end{align*}
from Lemma 2.2 in \cite{HT}.
Hence we get
    \begin{align*}
         \|\nabla u_l\|^2_{L^2(B)}
         &\leq \|\zeta_l e^{\zeta_l\cdot x}\gamma_l^{-1/2}(1+\psi_l)\|_{L^2(B)}^2+\| e^{\zeta_l\cdot x}\nabla\gamma_l^{-1/2}(1+\psi_l)\|_{L^2(B)}^2\\
         &\quad+\| e^{\zeta_l\cdot x}\gamma_l^{-1/2}(\nabla \psi_l)\|_{L^2(B)}^2\\
         &\leq  C e^{R|\zeta_l|}\left(1+\|\psi_l\|^2_{\dot{X}^{1/2}_{\zeta_l}}\right).
    \end{align*}
By direct computation we deduce that $\|\psi_l\|_{\dot X^{1/2}_{\zeta_l}}\leq C \|Q_l\|_{\dot X^{-1/2}_{\zeta_l}}\leq C|\zeta_l|^{1/2}$. Then the estimate (\ref{uH1}) holds. Recall that $|\zeta_l|=\sqrt{2}\tau$ .
Summing up, we have
\begin{align*}
              |\langle  Q_2-Q_1| e^{-ir\eta\cdot x} \rangle|
        &\leq   C \langle r\rangle^{1/2} \sum^2_{l,j=1}  \|Q_j\|_{\dot X^{-1/2}_{\zeta_j}}\| \psi_l\|_{\dot X^{1/2}_{\zeta_l}}\\
        &\quad+C\left(\tau^{-1}+\tau^{-\varepsilon/(1+\varepsilon)} +k^2\tau^{-1}\right) \langle r\rangle^{1/2}\| \psi_1\|_{\dot X^{1/2}_{\zeta_1}}\| \psi_2\|_{\dot X^{1/2}_{\zeta_2}}\\
        &\quad +CA^{1/2} e^{\sqrt{2}R\tau}.
\end{align*}
Integrating on both sides and using H\"older inequality, we have
\begin{align*}
              & |\langle  Q_2-Q_1| e^{-ir\eta\cdot x} \rangle|\\
        &\leq   C \langle r\rangle^{1/2}  \sum^2_{l,j=1} \left(\frac{1}{\lambda}\int_{S^{n-1}}\int_{\lambda}^{2\lambda}  \|Q_l\|^2_{\dot X^{-1/2}_{\zeta_l}}   d\tau d\eta_1\right)^{1/2} \left(\frac{1}{\lambda}\int_{S^{n-1}}\int_{\lambda}^{2\lambda}  \|Q_j\|^2_{\dot X^{-1/2}_{\zeta_j}}   d\tau d\eta_1\right)^{1/2}\\
        &\quad  +C\left(\frac{1}{\lambda}\int_{S^{n-1}}\int_{\lambda}^{2\lambda} \left(\tau^{-1}+\tau^{-\varepsilon/(1+\varepsilon)} +k^2\tau^{-1}\right)^2\langle r\rangle    \|Q_1\|^2_{\dot X^{-1/2}_{\zeta_1}} d\tau d\eta_1\right)^{1/2}  \\
        &\quad \left(\frac{1}{\lambda}\int_{S^{n-1}}\int_{\lambda}^{2\lambda}    \|Q_2\|^2_{\dot X^{-1/2}_{\zeta_2}} d\tau d\eta_1\right)^{1/2}
        +C  A^{1/2} e^{2\sqrt{2}R\lambda} .
\end{align*}
By Lemma \ref{intQ}, we deduce
\begin{align}\label{four}
 |\langle  Q_2-Q_1| e^{-ir\eta\cdot x} \rangle|
        &\leq   C \langle r\rangle^{1/2} \left( \left(1+\langle r\rangle^2\lambda^{-1}\right)\lambda^{-\varepsilon/(1+\varepsilon)} +k^4\lambda^{-1} \right)+ CA^{1/2} e^{2\sqrt{2}R\lambda},
\end{align}
where $\tau\in [\lambda,2\lambda]$ and $\tau>\sqrt{2}^{-1}C_*k^2$ with $k\geq 1$ .

Denote that $\mathcal{F}(f)$ the Fourier transformation of a function $f$.
Then we have the following estimates.
\begin{lemma}
     Let $a_0\geq \sqrt{2}^{-1}C_*$ and $k\geq 1$ where $C_*$ is the constant defined in Theorem \ref{CGOestimate}. Then for $r\geq 0$ and $\eta\in S^{n-1}$ the following statements hold: if $0\leq r\leq a_0k^\alpha$, then
     \begin{align}\label{inequ1}
            |\mathcal{F}(Q_2-Q_1)(r\eta)|
    \leq  C\left(\langle r\rangle k^{\frac{-\alpha\varepsilon}{1+\varepsilon}}+k^{4-\alpha} \right)+CA^{1/2} e^{Ck^\alpha};
     \end{align}
     if $r\geq \sqrt{2}^{-1}C_*k^\alpha$, then
     \begin{align}\label{inequ2}
            |\mathcal{F}(Q_2-Q_1)(r\eta)|
       \leq  C\left(\langle r\rangle^{5/2}r^{-1-\frac{\varepsilon}{1+\varepsilon}}+k^{4}r^{-1/2} \right)+ CA^{1/2} e^{Cr}
     \end{align}
     where $\alpha\geq 2$. Here the constant $C$ depends on $n,\Omega,\varepsilon$ and $M$.
\end{lemma}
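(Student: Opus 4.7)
The starting point is inequality (\ref{four}), which, after identifying the pairing $\langle Q_2-Q_1\mid e^{-ir\eta\cdot x}\rangle$ with the Fourier transform $\mathcal{F}(Q_2-Q_1)(r\eta)$ (the extension of $Q_j$ to $\mathbb{R}^n$ makes this meaningful), gives
\begin{equation*}
     |\mathcal F(Q_2-Q_1)(r\eta)|\leq C\langle r\rangle^{1/2}\lambda^{-\varepsilon/(1+\varepsilon)}+C\langle r\rangle^{5/2}\lambda^{-1-\varepsilon/(1+\varepsilon)}+C\langle r\rangle^{1/2}k^4\lambda^{-1}+CA^{1/2}e^{2\sqrt{2}R\lambda},
\end{equation*}
valid for any $\lambda\geq \sqrt 2^{-1}C_*k^2$ (so that the interval $[\lambda,2\lambda]$ lies in the regime where the CGO construction of Theorem \ref{CGOestimate} is available). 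The estimates (\ref{inequ1}) and (\ref{inequ2}) are then obtained by two different optimal choices of $\lambda$ in the two regimes of $r$.

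For the regime $0\le r\le a_0 k^\alpha$, the plan is to choose $\lambda=a_0 k^\alpha$. Since $a_0\geq \sqrt 2^{-1}C_*$ and $\alpha\geq 2$, the admissibility constraint $\lambda\geq \sqrt 2^{-1}C_*k^2$ is satisfied for $k\geq 1$. Substituting this choice into the displayed bound, each term takes its desired form: $\lambda^{-\varepsilon/(1+\varepsilon)}=C\,k^{-\alpha\varepsilon/(1+\varepsilon)}$, $k^4\lambda^{-1}=C\,k^{4-\alpha}$, and $e^{2\sqrt{2}R\lambda}=e^{Ck^\alpha}$. The factors of $\langle r\rangle^{1/2}$ and $\langle r\rangle^{5/2}$ are then absorbed into $C\langle r\rangle$ together with appropriate manipulations using $r\le a_0k^\alpha$, which collapses the polynomial-in-$r$ prefactors into the stated form.

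For the regime $r\ge \sqrt 2^{-1}C_*k^\alpha$, the plan is to choose $\lambda=r$. The CGO admissibility $\lambda\geq \sqrt 2^{-1}C_*k^2$ now holds because $r\geq \sqrt 2^{-1}C_*k^\alpha\geq \sqrt 2^{-1}C_*k^2$ for $\alpha\geq 2$, $k\geq 1$. Substituting $\lambda=r$ yields $\langle r\rangle^{1/2}r^{-\varepsilon/(1+\varepsilon)}$, $\langle r\rangle^{5/2}r^{-1-\varepsilon/(1+\varepsilon)}$, and $\langle r\rangle^{1/2}k^4/r\lesssim k^4 r^{-1/2}$, plus the exponential $A^{1/2}e^{Cr}$. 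The first of these is dominated by the second for large $r$ (and for bounded $r$ both are controlled by the ambient constants), so the entire polynomial part collapses to $\langle r\rangle^{5/2}r^{-1-\varepsilon/(1+\varepsilon)}+k^4 r^{-1/2}$, matching (\ref{inequ2}).

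The only genuine obstacle is bookkeeping the interplay between the powers of $\langle r\rangle$, $\tau\sim\lambda$ and $k$ that come from Lemma \ref{intQ} and the $\langle r\rangle^{1/2}$ prefactor of Lemma 2.2 in \cite{HT}; once the two scale choices $\lambda=a_0k^\alpha$ and $\lambda=r$ are made, each term in the displayed bound reduces to one of the terms in (\ref{inequ1})--(\ref{inequ2}) by elementary manipulations, and the hypothesis $a_0\ge \sqrt 2^{-1}C_*$ is precisely what is needed to keep the CGO parameters admissible on both sides of the transition $r\sim k^\alpha$.
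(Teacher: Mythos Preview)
Your approach is correct and is exactly the one in the paper: starting from the averaged estimate (\ref{four}), one specializes the averaging parameter to $\lambda\sim a_0k^\alpha$ when $0\le r\le a_0k^\alpha$ and to $\lambda\sim r$ when $r\ge \sqrt{2}^{-1}C_*k^\alpha$, and then simplifies each term. The paper's own proof is in fact terser than yours (it just names the two substitutions and stops), so your write-up already contains more of the bookkeeping than the original.
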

\begin{proof}
     We consider $\lambda<\tau<2\lambda$. If $0\leq r\leq a_0k^\alpha$, then we take $\tau=a_0k^\alpha$ in (\ref{four}). On the other hand, we let $\tau=r$ when $r\geq \sqrt{2}^{-1}C_*k^\alpha$. This completes the proof.

\end{proof}

\begin{proposition}
     Let $2s>n+3 $. Then we have
     \begin{align}\label{Q12}
          \|Q_2-Q_1\|^2_{H^{-s}(\mathbb{R}^n)}\leq  Ck^4T^{-m} +C(k^{\frac{-2\alpha\varepsilon}{1+\varepsilon}}+k^{8-2\alpha} )+CAe^{Ck^\alpha}+CAe^{CT},
     \end{align}
     where $C$ depends on $n,s,\varepsilon,\Omega$ and $M$.
\end{proposition}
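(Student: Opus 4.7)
My plan is to rewrite the $H^{-s}$-norm of $Q_2-Q_1$ as a radial integral in Fourier space and split it into three frequency zones, matched to the two regimes in the preceding lemma plus a high-frequency tail. Concretely, I start from
\begin{equation*}
\|Q_2-Q_1\|^2_{H^{-s}(\mathbb{R}^n)}=\int_{S^{n-1}}\int_0^\infty (1+r^2)^{-s}|\mathcal{F}(Q_2-Q_1)(r\eta)|^2 r^{n-1}\,dr\,d\eta,
\end{equation*}
fix a threshold $T\geq a_0 k^\alpha$, and decompose the inner $r$-integral as $\int_0^{a_0 k^\alpha}+\int_{a_0 k^\alpha}^T+\int_T^\infty$.

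On the first region I insert (\ref{inequ1}) and square, producing the pointwise bound $C\langle r\rangle^2 k^{-2\alpha\varepsilon/(1+\varepsilon)}+Ck^{8-2\alpha}+CAe^{Ck^\alpha}$. Because $2s>n+3$, the weight $(1+r^2)^{-s}r^{n-1}$ is integrable against $\langle r\rangle^2$ on all of $[0,\infty)$, so carrying out the $r$- and $\eta$-integrations converts this contribution into $Ck^{-2\alpha\varepsilon/(1+\varepsilon)}+Ck^{8-2\alpha}+CAe^{Ck^\alpha}$. On the second region I insert (\ref{inequ2}), yielding after squaring the bound $C(\langle r\rangle^5 r^{-2-2\varepsilon/(1+\varepsilon)}+k^8 r^{-1})+CAe^{Cr}$. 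The first two summands are again integrable against $(1+r^2)^{-s}r^{n-1}$ thanks to $2s>n+3$, and, because $r\geq a_0 k^\alpha$ throughout the region, their total contribution is bounded by a multiple of $k^{-2\alpha\varepsilon/(1+\varepsilon)}+k^{8-2\alpha}$ and hence absorbed into the terms already produced from (\ref{inequ1}). The exponential summand integrates to at most $CAe^{CT}$ since $T$ is the upper endpoint.

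For the tail $r\geq T$ the pointwise lemma no longer applies, so instead I use an a priori Sobolev bound $\|Q_2-Q_1\|_{H^{-s+m/2}(\mathbb{R}^n)}\leq Ck^2$. This follows from the decomposition $Q_j=q_j+(k^2+D_j)\gamma_j^{-1}$: the $(k^2+D_j)\gamma_j^{-1}$ piece is bounded in $L^\infty$ by $Ck^2$ and compactly supported, while $q_j=-\Delta\sqrt{\gamma_j}/\sqrt{\gamma_j}$ lies in a fixed negative Sobolev space by the $C^{1,\varepsilon}$-regularity of $\gamma_j$. Writing
\begin{equation*}
\int_T^\infty (1+r^2)^{-s}|\mathcal{F}(Q_2-Q_1)(r\eta)|^2 r^{n-1}\,dr\leq T^{-m}\int_T^\infty (1+r^2)^{-s+m/2}|\mathcal{F}(Q_2-Q_1)(r\eta)|^2 r^{n-1}\,dr
\end{equation*}
and integrating over $\eta\in S^{n-1}$ gives $Ck^4T^{-m}$. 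Adding the four contributions delivers the claimed inequality.

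The main obstacle is identifying the exponent $m$ and justifying the a priori bound $\|Q_2-Q_1\|_{H^{-s+m/2}}\leq Ck^2$, since $q_j$ is merely a distribution under the $C^{1,\varepsilon}$ hypothesis on $\gamma_j$; once this bound is in hand, the rest is a careful bookkeeping of weighted radial integrals under $2s>n+3$.
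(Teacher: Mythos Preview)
Your proposal is correct and follows essentially the same route as the paper: the same three-zone radial split at $a_0k^\alpha$ and $T$, the same use of (\ref{inequ1}) and (\ref{inequ2}) on the low and middle zones, and the same ``pull out a power of $T$'' trick on the tail. The one point you flag as an obstacle is resolved in the paper simply by taking $m=2s-2$, so that $-s+m/2=-1$ and the required a priori bound is just $\|Q_2-Q_1\|_{H^{-1}(\mathbb{R}^n)}\leq Ck^2$; this is immediate since $(k^2+D_j)\gamma_j^{-1}$ is bounded and compactly supported while $q_j\in H^{-1}$ follows from writing $\langle q_j,\phi\rangle$ as a first-order expression in $\nabla\gamma_j^{1/2}$.
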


\begin{proof}
     Let $Q=Q_2-Q_1$. Written in polar coordinates, we have that
     \begin{align*}
     \|Q\|^2_{H^{-s}(\mathbb{R}^n)}
     &\leq C \int^\infty_0 \int_{|\eta|=1} |\mathcal{F}Q(r\eta)|^2 \left(1+r^2\right)^{-s}r^{n-1} d\eta dr\\
     &\leq C\Big(\int_0^{a_0k^\alpha} \int_{|\eta|=1} |\mathcal{F}Q(r\eta)|^2 \left(1+r^2\right)^{-s}r^{n-1} d\eta dr\\
     &\quad+\int^T_{a_0k^\alpha} \int_{|\eta|=1} |\mathcal{F}Q(r\eta)|^2 \left(1+r^2\right)^{-s}r^{n-1} d\eta dr\\
     &\quad+\int_T^\infty \int_{|\eta|=1} |\mathcal{F}Q(r\eta)|^2 \left(1+r^2\right)^{-s}r^{n-1} d\eta dr\Big)\\
    & =:  C(I_1+I_2+I_3),
\end{align*}
where $a_0\geq \sqrt{2}^{-1}C_*$ and $T\geq a_0 k^\alpha$ are parameters which will be chosen later.
We estimate $I_3$ first. We get
\begin{align}\label{I3}
     I_3 &\leq C\int_{|\xi|\geq T} \left(1+|\xi|^2\right)^{-s} |\mathcal{F}Q(\xi)|^2  d\xi \notag\\
         &\leq  C T^{-m} \int_{|\xi|\geq T}  \left(1+|\xi|^2\right)^{-1} |\mathcal{F}Q(\xi)|^2d\xi  \notag\\
         &\leq CT^{-m}\|Q_2-Q_1\|^2_{H^{-1}(\mathbb{R}^n)}.
\end{align}
Denote $m:=2s-2$. To estimate $\|Q_2-Q_1\|^2_{H^{-1}(\mathbb{R}^n)}$. Recall that $Q_j=q_j+(k^2+D_j)\gamma^{-1}_j$.
Observe that
\begin{align*}
   \langle q_j| \phi\rangle=&\int_{\mathbb{R}^n}\nabla\gamma_j^{1/2}\cdot\nabla\left(\gamma_j^{-1/2}\phi\right)dx\\
                         =&\int_{\mathbb{R}^n}\nabla\gamma_j^{1/2}\cdot\nabla\gamma_j^{-1/2}\phi
                         +\gamma_j^{-1/2}\nabla\gamma_j^{1/2}\cdot\nabla\phi dx\\
                         =&\int_{\mathbb{R}^n}\mathcal{F}\left(\nabla\gamma_j^{1/2}\cdot\nabla\gamma_j^{-1/2}\right)\mathcal{F}\phi
                         +\mathcal{F}\left(\gamma_j^{-1/2}\nabla\gamma_j^{1/2}\right)\cdot (i\xi) \mathcal{F}\phi d\xi\\
\end{align*}
for any $\phi\in H^1(\mathbb{R}^n)$.
This implies that
\begin{align*}
\|q_j\|^2_{H^{-1}(\mathbb{R}^n)}
&\leq \int_{\mathbb{R}^n}(1+|\xi|^2)^{-1}\left|\mathcal{F}\left(\nabla\gamma_j^{1/2}\cdot\nabla\gamma_j^{-1/2}\right)
                         +\mathcal{F}\left(\gamma_j^{-1/2}\nabla\gamma_j^{1/2}\right)\cdot (i\xi) \right|^2 d\xi\\
                        & \leq \|\nabla\gamma_j^{1/2}\cdot\nabla\gamma_j^{-1/2}\|_{L^2(\mathbb{R}^n)}+ \|\gamma_j^{-1/2}\nabla\gamma_j^{1/2}\|_{L^2(\mathbb{R}^n)} \leq C,
\end{align*}
where $C$ depends on $n,\Omega$ and $M$.
Thus, if $k\geq 1$,
\begin{align*}
         \|Q_2-Q_1\|^2_{H^{-1}(\mathbb{R}^n)}
    \leq \|q_2-q_1\|^2_{H^{-1}(\mathbb{R}^n)} +\|\left(k^2+D_1\right)\gamma^{-1}_1-\left(k^2+D_2\right)\gamma^{-1}_2\|^2_{H^{-1}(\mathbb{R}^n)}\leq Ck^4.
\end{align*}
Hence,
$$
I_3\leq  Ck^4T^{-m}
$$
with $C$ depends on $M$ and $\Omega$.
To estimate $I_1$, we use (\ref{inequ1}) and $2s>n+3$ so that
\begin{align}\label{I1}
     I_1&\leq \int_0^{a_0k^\alpha} \int_{|\eta|=1} \left(  C(\langle r\rangle^2k^{\frac{-2\alpha\varepsilon}{1+\varepsilon}}+k^{8-2\alpha} )
          +CAe^{Ck^\alpha}    \right)(1+r^2)^{-s}r^{n-1} d\eta dr  \notag\\
        &\leq C(k^{\frac{-2\alpha\varepsilon}{1+\varepsilon}}+k^{8-2\alpha} )
          +CAe^{Ck^\alpha}  .
\end{align}
Since $2s>n+3$, we can deduce that
\begin{align}\label{I20}
   & \int^T_{a_0k^\alpha} \int_{|\eta|=1}  \langle r\rangle^{5}r^{-2-\frac{2\varepsilon}{1+\varepsilon}}
             (1+r^2)^{-s}r^{n-1} d\eta dr  \notag\\
        &\leq Ck^{\frac{-2\alpha\varepsilon}{1+\varepsilon}}\int^T_{a_0k^\alpha} \int_{|\eta|=1} \langle r\rangle^{5}r^{-2}
            (1+r^2)^{-s}r^{n-1} d\eta dr
       \leq Ck^{\frac{-2\alpha\varepsilon}{1+\varepsilon}}
\end{align}
and
\begin{align}\label{I21}
     &\int^T_{a_0k^\alpha} \int_{|\eta|=1}k^{8}r^{-1}
            (1+r^2)^{-s}r^{n-1} d\eta dr  \notag\\
        &\leq \int^T_{a_0k^\alpha} \int_{|\eta|=1}k^{8}r^{-2}
             (1+r^2)^{-s+3/2}r^{n-1} d\eta dr
        \leq Ck^{8-2\alpha}.
\end{align}
Applying estimate (\ref{inequ2}), (\ref{I20}) and (\ref{I21}), we have
\begin{align}\label{I2}
     I_2&\leq \int^T_{a_0k^\alpha} \int_{|\eta|=1} \Big(  C(\langle r\rangle^{3-\frac{2\varepsilon}{1+\varepsilon}}+k^{8}r^{-1} )
             +CAe^{Cr}  \Big)(1+r^2)^{-s}r^{n-1} d\eta dr  \notag\\
        &\leq C\left(k^{\frac{-2\alpha\varepsilon}{1+\varepsilon}}+k^{8-2\alpha} \right)
          +CAe^{CT}
\end{align}
for $2s>n+3$.
This completes the proof.

\end{proof}

Now we prove our main theorem.\\
\emph{Proof of Theorem \ref{main}}.

We consider the following two cases:
$$
     (\text{i})\ p\log\frac{1}{A}\geq a_0k^\alpha \ \ \ \hbox{and}\ \ (\text{ii})\ p\log\frac{1}{A}\leq a_0k^\alpha,
$$
where $a_0\geq \sqrt{2}^{-1}C_*$ and $p>0$ are constants which will be determined later. We begin with the case (i). We choose $T=p\log\frac{1}{A}$, which is greater than or equal to $a_0k^\alpha$ by the condition (i). We want to show that there exists $C_1>0$ such that
\begin{align}\label{T1}
     T^{-m}\leq C_1\left(k^\alpha+\log\frac{1}{A} \right)^{-m}
\end{align}
and
\begin{align}\label{T2}
     e^{CT}A\leq C_1\left(k^\alpha+\log\frac{1}{A} \right)^{-m}.
\end{align}
Note that (\ref{T1}) is equivalent to
\begin{align}\label{C1}
    C_1^{-1/m}\left(k^\alpha+\log\frac{1}{A}\right)\leq p\log\frac{1}{A}.
\end{align}
Since
$$
    k^\alpha+\log\frac{1}{A}\leq \left(p a_0^{-1}+1\right) \log\frac{1}{A}
$$
by (i), (\ref{C1}) holds whenever
$$
    C_1^{-1/m}\leq \frac{p}{p a_0^{-1}+1}.
$$
On the other hand, (\ref{T2}) is equivalent to
$$
     CT+\log A\leq \log C_1-m\log\left(k^\alpha+\log\frac{1}{A}\right).
$$
Then
\begin{align}\label{C}
   Cp\log \frac{1}{A}\leq \log \frac{1}{A}+\log C_1-m\log\left(k^\alpha+\log\frac{1}{A}\right).
\end{align}
Note that $\log\left(k^\alpha+\log\frac{1}{A}\right) \leq \log\left(\left(pa_0^{-1}+1\right)\log\frac{1}{A}\right)=\log\left(pa_0^{-1}+1\right)+\log\log\frac{1}{A}$. Assume that $\log\frac{1}{A}\geq 1$. If
\begin{align*}
    Cp\log \frac{1}{A}
    \leq \log C_1+\log\frac{1}{A}-m\left(\log\left(pa_0^{-1}+1\right)+\log\log\frac{1}{A}\right),
\end{align*}
is true, then (\ref{C}) holds.
Choosing
$$
 p\leq \frac{1}{2C},
$$
it is sufficient to show that
$$
  0\leq \log \frac{1}{A}+2\log C_1-2m\log\log\frac{1}{A}
   -2m\log\left(pa_0^{-1}+1\right).
$$
Since
\begin{align*}
     \inf_{0\leq A\leq e^{-1}}\left(\log \frac{1}{A}-2m\log\log\frac{1}{A}\right)
     &=\inf_{z\geq 1}(z-2m\log z)\\
     &\geq  \inf_{z\geq 0}(z-2m\log z)\\
     &=2m-2m\log(2m)\\
     &=\log\left(\frac{e}{2m}\right)\cdot (2m),
\end{align*}
we take
$$
     C_1\geq \left(\frac{p}{a_0}+1\right)^{m}\left(\frac{e}{2m}\right)^{-m}.
$$

Now we consider the case (ii). By choosing $T=a_0k^\alpha$, then from (\ref{Q12}) we have that
$$
\|Q\|^2_{H^{-s}(\mathbb{R}^n)}\leq Ca_0^{-m}k^4 k^{-\alpha m} +C\left(k^{\frac{-2\alpha\varepsilon}{1+\varepsilon}}+k^{8-2\alpha} \right)+CAe^{Ca_0k^\alpha}.
$$
Hence, it remains to show that
\begin{align}\label{k}
     k^{-\alpha m}\leq C_2\left(k^\alpha+\log\frac{1}{A}\right)^{-m},
\end{align}
i.e.,
$$
     k^{\alpha}\geq C_2^{-1/m}\left(k^\alpha+\log\frac{1}{A}\right).
$$
Since
$$
     k^\alpha+\log\frac{1}{A}\leq \left(1+\frac{a_0}{p}\right)k^\alpha
$$
by condition (ii), we have (\ref{k}) if we take $C_2$ large enough so that
$$
     C_2\geq \left(1+\frac{a_0}{p}\right)^m.
$$
Summing up, (\ref{Q12}) can be bounded by
     \begin{align}
          \|Q_2-Q_1\|_{H^{-s}(\mathbb{R}^n)}\leq  C\left(e^{Ck^\alpha}A^{1/2}+k^{\frac{-\alpha\varepsilon}{1+\varepsilon}}+k^{4-\alpha}+ k^2\left(k^\alpha+\log \frac{1}{A}\right)^{-m/2}\right).
     \end{align}
By the definition of $Q_j=q_j+(k^2+D_j)\gamma_j^{-1}$,
we deduce that
\begin{align*}
    k^2\|\gamma_1^{-1}-\gamma_2^{-1}\|_{H^{-s}(\Omega)}&\leq \|D_1\gamma_1^{-1}-D_2\gamma_2^{-1}\|_{H^{-s}(\Omega)}+ \|q_2-q_1\|_{H^{-s}(\Omega)}\\
    &\quad+C\left(e^{Ck^\alpha}A^{1/2}+k^{\frac{-\alpha\varepsilon}{1+\varepsilon}}+k^{4-\alpha}+ k^2\left(k^\alpha+\log \frac{1}{A}\right)^{-m/2}\right).
\end{align*}
Since $\|\gamma_j\|_{C^{1,\varepsilon}(\mathbb{R}^n)}\leq M, \gamma_j >1/(2M)$ and $\|D\|_{L^\infty(\mathbb{R}^n)}\leq M$, we have
$$
     \|D_1\gamma_1^{-1}-D_2\gamma_2^{-1}\|_{H^{-s}(\Omega)}\leq C
$$
and
$$
\|q_2-q_1\|_{H^{-s}(\Omega)}\leq C
$$
with $C$ depends on $n,\Omega$ and $M$.
Hence,
we have
     \begin{align*}
          \|\gamma_1^{-1}-\gamma_2^{-1}\|_{H^{-s}(\Omega)} \leq  Ck^{-2} \left( 1
          +   e^{Ck^\alpha}A^{1/2}+ k^{\frac{-\alpha\varepsilon}{1+\varepsilon}}
          +k^{4-\alpha}+k^2\left(k^\alpha+\log \frac{1}{A}\right)^{-m/2} \right)
     \end{align*}
with $\alpha>2$.
Moreover, for $2s>n+3, n\geq 3$ and $-\log A\geq 1$, we get
\begin{align*}
\left(k^\alpha+\log \frac{1}{A}\right)^{-m/2}\leq k^{\alpha(1-s)}\leq k^{-2}.
\end{align*}
Therefore, the estimate
  \begin{align*}
          \|\gamma_1^{-1}-\gamma_2^{-1}\|_{H^{-s}(\Omega)}
          \leq  Ce^{Ck^\alpha}A^{1/2}+Ck^{-2}+Ck^{2-\alpha},\ \alpha>2
  \end{align*}
holds for some constant $C$ depends on $n, s, \varepsilon, \Omega$ and $M$.
The proof is completed.

\bigskip

\textbf{Acknowlegments.}
The author would like to thank professor Gunther Uhlmann for his encouragements and helpful discussions. The author is partially supported by NSF.

\end{document}